\documentclass[12pt]{article}
\usepackage{amsmath, amssymb, amsthm}
\usepackage{mathtools}
\usepackage{hyperref}
\usepackage{xcolor}
\usepackage{booktabs}
\usepackage{geometry}
\usepackage{tikz,tkz-graph}
\usetikzlibrary{decorations.markings, arrows.meta}

\newtheorem{theorem}{Theorem}
\newtheorem{lemma}[theorem]{Lemma}
\newtheorem{proposition}[theorem]{Proposition}
\newtheorem{conjecture}[theorem]{Conjecture}
\newtheorem{question}[theorem]{Question}
\newtheorem{corollary}[theorem]{Corollary}
\theoremstyle{definition}
\newtheorem{definition}[theorem]{Definition}
\newtheorem{example}[theorem]{Example}
\theoremstyle{remark}
\newtheorem*{remark}{Remark}

\newenvironment{salign}{
    \begin{equation}
    \begin{aligned}
}{
    \end{aligned}
    \end{equation}
    \ignorespacesafterend
}

\definecolor{changelogblue}{RGB}{0,90,200}
\definecolor{changelogred}{RGB}{200,30,30}

\newcommand{\defeq}{\stackrel{\textnormal{def}}{=}}

\newcommand{\prim}{%
  \mathbin{%
    \tikz[baseline=-0.2ex, x=1.15ex, y=1.15ex, line width=0.10ex, line cap=round]{
      \draw (0.2,0.00) -- (0.2,1.00);
      \draw (0.8,0.00) -- (0.8,1.00);
      \draw (0.00,0.2) -- (1.00,0.2);
      \draw (0.00,0.8) -- (1.00,0.8);
    }%
  }%
}

\def \C {{\mathbb C}}

\def \R {{\mathbb R}}

\def \Z {{\mathbb Z}}

\def \bP {\textnormal{\textbf P}}
\def \bE {\textnormal{\textbf E}}

\def \cD {{\mathcal D}}

\numberwithin{equation}{section}

\title{On the Largest Strongly Connected Component of Randomly Oriented Divisor Graphs}

\author{
  Jihyung Kim\\
  \and
  Tristan Phillips\\
}
\date{}

\begin{document}
\maketitle

\begin{abstract}
    We introduce the study of \textit{randomly oriented divisor graphs}. For each $\rho \in [0,1]$, the randomly oriented divisor graph $\cD_\rho(N)$ is obtained from the divisor graph on $\{1, 2, \ldots, N\}$ by directing each edge according to divisibility and independently reversing the direction of each edge with probability $\rho$. We study the expected size of the largest strongly connected component, $\bE[\#\Phi_{}(\cD_\rho(N))]$. Our main result gives a lower bound for this quantity in terms of the distribution of values of the divisor function $\tau(n)$. As a consequence, we show that for any fixed $\rho \in (0,1)$, the largest strongly connected component has expected size asymptotic to $N$. To obtain explicit bounds, we prove an effective version of a theorem of Hardy and Ramanujan on the normal order of $\log \tau(n)$, which may be of independent interest.
\end{abstract}


\section{Introduction}

The interplay between the multiplicative structure of the integers and graph theory has been a source of rich problems in combinatorial number theory.
The \textit{divisor graphs} $G_N$, whose vertices are labeled $\{1,2,\dots, N\}$ with edges between vertices labeled $a$ and $b$ if $a|b$ or $b|a$, encode arithmetic information in a combinatorial framework. 
Divisor graphs have been studied in the context of number theory (e.g., \cite{Pom83, ES95, Sai98, MS20, McN21}) and combinatorics (e.g., \cite{CMS01, Fra03, AAA10}). For a general survey on divisor graphs see \cite{RD23}. 

Divisor graphs have also been studied in network science (e.g., \cite{SZ10, SBA15, RA20}), where it has been noted that they share certain structural properties with real networks, such as being \textit{scale free}, i.e., their degree distribution follows a power law (asymptotically). Classical random graph models such as the Erd\H{o}s--R\'enyi graphs are not scale-free. Many real networks are not only scale-free, but also directed. The \textit{randomly oriented divisor graphs} introduced in this article inherit both the rich arithmetic structure coming from the divisor graph (making them scale-free), and also randomness coming from how we choose the orientation.   

A natural directed structure on $G_N$ is given by the \textit{oriented divisor graph} $D_N$, which is obtained from $G_N$ by directing each edge so that if $a|b$ then there is an edge from $b$ to $a$. This orientation reflects the poset structure of divisibility. In this paper we consider all orientations on $G_N$, by introducing a probabilistic structure on divisor graphs.

For each $\rho\in[0,1]$ the \textit{randomly oriented divisor graph} $\cD_\rho(N)$ is obtained from the oriented divisor graph $D_N$ by independently reversing the direction of each edge  with probability $\rho$. When $\rho=0$ one recovers the oriented divisor graph, but when $\rho\not\in \{0,1\}$ the randomly oriented graph interpolates between the arithmetic structure of divisibility and the randomly oriented graph on the same edge set. 

Randomly oriented graphs have been studied in various ways, including percolation theory (e.g., \cite{McD81, Gri01, Lin11, Nar18}) and random walks (e.g., \cite{CP04,GL07,CGPS11}). In most of these cases the underlying graph has a simple combinatorial structure (e.g., a lattice or complete graph). In contrast, the divisor graph possesses a rich arithmetic structure. The randomly oriented divisor graph thus provides a new setting in which tools from probabilistic number theory can be used to study questions in random graph theory, and vice versa.

\subsection{Statement of results}

The main object of study in this article is the expected value of the size of the largest strongly connected component of $\cD_\rho(N)$, denoted $\bE[\#\Phi_{}(\cD_\rho(N))]$. 

For $m\in \Z_{>0}$, let $\tau(m)$ denote the number of divisors of $m$. For example, 
\begin{equation}
    \tau(12)=\#\{1,2,3,4,6,12\}=6.
\end{equation}

Our first result gives a general lower bound in terms of the distribution of the divisor function.

\begin{theorem}\label{thm:Largest_SCC_abstract_bound}
    If
    \begin{equation}
        \#\{n\leq N : \tau(n)-2\geq x\} \geq y,
    \end{equation}
    then
    \begin{equation}\label{eq:LSCC_bound}
        \bE[\#\Phi_{}(\cD_{\rho}(N))] \geq  y \left(1 - \rho(2\rho-\rho^2)^{x} - (1-\rho)(1-\rho^2)^{x}\right).
    \end{equation}
\end{theorem}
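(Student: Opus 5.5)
The plan is to lower-bound the largest strongly connected component by the strongly connected component containing the vertex $1$, which we denote $C_1$. Since $1$ divides every integer, the vertex $1$ is adjacent to every other vertex of $G_N$. By definition $\#\Phi(\cD_\rho(N))\ge \#C_1$. Linearity of expectation then gives
\begin{equation*}
  \bE[\#\Phi(\cD_\rho(N))] \ \ge\ \sum_{n\in S} \bP(n\in C_1), \qquad S=\{n\le N : \tau(n)-2\ge x\},
\end{equation*}
so it suffices to prove, for each individual $n\in S$, that $\bP(n\in C_1)\ge 1-\rho(2\rho-\rho^2)^x-(1-\rho)(1-\rho^2)^x$. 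Using linearity means we never need independence between different $n$, even though their events share edges of the form $\{1,d\}$.

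Now fix $n\in S$. We may assume $x\ge 1$: if $x\le 0$, the claimed lower bound is at most $0$ and there is nothing to prove. Then $n>1$, and $n$ has at least $x$ divisors $d$ with $1<d<n$; call this set $T_n$. For each $d\in T_n$, the edges $\{1,d\}$ and $\{d,n\}$ are present. These $2|T_n|$ edges are all distinct from one another and from the edge $\{1,n\}$. Hence their orientations are mutually independent. Recall the default orientation points from the multiple to the divisor, and each edge is reversed independently with probability $\rho$.

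We condition on the orientation of $\{1,n\}$ and split into two cases.
\begin{itemize}
  \item[(i)] With probability $1-\rho$ the edge is $n\to 1$. Then $n\in C_1$ as soon as there is a path $1\to d\to n$ for some $d\in T_n$. This requires both edges $\{1,d\}$ and $\{d,n\}$ to be reversed, which happens with probability $\rho^2$. These events are independent over $d\in T_n$, so all of them fail with probability $(1-\rho^2)^{|T_n|}\le(1-\rho^2)^x$.
  \item[(ii)] With probability $\rho$ the edge is $1\to n$. Then it suffices to find a path $n\to d\to 1$, which requires both edges to keep their default direction, with probability $(1-\rho)^2$. All of these fail with probability at most $(1-(1-\rho)^2)^x=(2\rho-\rho^2)^x$.
\end{itemize}

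Combining the two cases, $\bP(n\notin C_1)\le (1-\rho)(1-\rho^2)^x+\rho(2\rho-\rho^2)^x$. Summing over the at least $y$ elements of $S$ gives \eqref{eq:LSCC_bound}. The only point requiring care is the independence in each case: the two-edge paths through distinct intermediate divisors $d$ must be edge-disjoint and must avoid the edge $\{1,n\}$. This holds because each such path consists of the edges $\{1,d\}$ and $\{d,n\}$ with $1<d<n$, so distinct $d$ give distinct edges. Beyond this, the argument is elementary.
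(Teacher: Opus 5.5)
Your proposal is correct and matches the paper's proof step for step. You bound by the component containing $1$, condition on the orientation of $\{1,n\}$, use independence of the edge-disjoint two-edge paths through the divisors $1<d<n$, and sum over $n$ by linearity of expectation; your explicit handling of $x\le 0$ is slightly more careful than the paper, though it tacitly assumes $y\ge 0$, which is the intended reading, since for $x<0$ and $y<0$ the statement as literally written can fail.
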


For example, when $\rho=1/2$, the bound \eqref{eq:LSCC_bound} becomes
\begin{equation}
    \bE[\#\Phi_{}(\cD_{1/2}(N))] \geq y (1-(3/4)^x).
\end{equation}

The strength of the bound \eqref{eq:LSCC_bound} depends on understanding how often $\tau(n)$ is large. A celebrated result of Hardy and Ramanujan \cite{HR17} shows that $\log(\tau(n))$ has normal order\footnote{See Definition \ref{def:normal_order} for a precise definition of normal order.} $\log(2)\log\log(n)$, i.e., for \textit{most} $n$, $\tau(n)\approx \log(n)^{\log(2)}$. Combining Theorem \ref{thm:Largest_SCC_abstract_bound} with this fact yields:

\begin{corollary}
    Let $\rho\in (0,1)$. The expected size of the largest strongly connected component of $\cD_{\rho}(N)$ is asymptotic to $N$, i.e., 
    \begin{equation}
        \lim_{N\to\infty} \frac{\bE[\#\Phi_{}(\cD_{\rho}(N))]}{N} = 1.
    \end{equation}
\end{corollary}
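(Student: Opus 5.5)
The plan is to apply Theorem \ref{thm:Largest_SCC_abstract_bound} with $x = x(N)\to\infty$ slowly and $y = y(N) = N - o(N)$. Since $\#\Phi(\cD_\rho(N))\le N$ always, it suffices to prove $\liminf_{N\to\infty} \bE[\#\Phi(\cD_\rho(N))]/N \ge 1$. Fix $\rho\in(0,1)$. Then $q_1 := 2\rho-\rho^2 = 1-(1-\rho)^2$ and $q_2 := 1-\rho^2$ both lie strictly in $(0,1)$. Hence
\begin{equation*}
  \rho q_1^{x} + (1-\rho) q_2^{x} \le \max(q_1,q_2)^{x} \longrightarrow 0 \quad\text{as } x\to\infty,
\end{equation*}
and the factor in parentheses in \eqref{eq:LSCC_bound} tends to $1$.

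The arithmetic input is the Hardy--Ramanujan normal order of $\log\tau(n)$, in the form: for every $\varepsilon>0$,
\begin{equation*}
  \#\{n\le N : \log\tau(n) < (\log 2-\varepsilon)\log\log n\} = o(N).
\end{equation*}
Fix $\varepsilon = \tfrac12\log 2$ and set $x(N) = (\log N)^{\varepsilon/2}-2$, say, taking the threshold at $\log\log\sqrt N$. Consider $n$ with $\sqrt N < n \le N$ that avoid the exceptional set. For these, $\tau(n) \ge (\log n)^{\log 2-\varepsilon} \ge (\tfrac12\log N)^{\varepsilon}$, which exceeds $x(N)+2$ for large $N$. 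The integers $n\le\sqrt N$ number only $\sqrt N = o(N)$, so we may take $y(N) = N - o(N)$.

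Substituting into Theorem \ref{thm:Largest_SCC_abstract_bound} gives
\begin{equation*}
  \bE[\#\Phi(\cD_\rho(N))] \ge \bigl(N-o(N)\bigr)\bigl(1 - o(1)\bigr).
\end{equation*}
Dividing by $N$ and letting $N\to\infty$ yields the limit $1$.

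The only delicate point is converting the normal order statement into a counting statement uniform enough to yield a threshold $x(N)\to\infty$. This is handled by using the fixed $\varepsilon$ and discarding $n\le\sqrt N$, so that $\log\log n \ge \log\log N - \log 2$. Nothing effective is needed for the corollary itself; the effective version is only needed for explicit rates.
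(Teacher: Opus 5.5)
Your proof is correct and is essentially the paper's argument. The paper gets this corollary by combining Theorem~\ref{thm:Largest_SCC_abstract_bound} with the Hardy--Ramanujan normal order of $\log\tau(n)$, taking $x\to\infty$ and $y=N-o(N)$ exactly as you do; discarding $n\leq\sqrt N$ to make the threshold uniform in $N$ supplies a detail the paper leaves implicit.
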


In order to prove explicit bounds, we prove the following quantitative version of the result of Hardy and Ramanujan, which may be of independent interest.

\begin{theorem}\label{thm:divisors_Durkan}
    Let $\epsilon\in (0,1)$, and define
    \begin{equation}\label{eq:c_epsilon_def}
        c_\epsilon \defeq \frac{4}{\epsilon}\left(\frac{e}{1-\epsilon}\right)^{4.096}.
    \end{equation}
    Then for every $N\geq 3$,
    \begin{equation}\label{eq:divisors_Durkan}
        \#\left\{ n\leq N : \tau(n) \geq \log(N)^{\log(2)(1-\epsilon)}\right\} \geq N - \frac{c_\epsilon\, N}{\log(N)^{\epsilon^2/2}}.
    \end{equation}
\end{theorem}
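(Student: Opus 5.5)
The plan is to replace $\tau$ by $\omega$ and then run a Rankin-type (Chernoff) argument on $\omega(n)$. Since $\tau(n)\geq 2^{\omega(n)}$, where $\omega(n)$ is the number of distinct prime factors of $n$, put $L=\log\log N$ and $t=(1-\epsilon)L$. Then
\begin{equation*}
\#\left\{n\leq N : \tau(n) < \log(N)^{\log(2)(1-\epsilon)}\right\} \leq \#\{n\leq N : \omega(n) < t\},
\end{equation*}
and it suffices to show that the right-hand side is at most $c_\epsilon N/\log(N)^{\epsilon^2/2}$. For any $z\in(0,1)$ we have $\mathbf 1_{\omega(n)<t}\leq z^{\omega(n)-t}$. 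Hence
\begin{equation*}
\#\{n\leq N : \omega(n)<t\}\leq z^{-t}\sum_{n\leq N} z^{\omega(n)}.
\end{equation*}
I will take $z=1-\epsilon$.

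The core step is an explicit bound of the shape
\begin{equation*}
\sum_{n\leq N} z^{\omega(n)} \leq C(z,\epsilon)\, N\,(\log N)^{z-1},
\end{equation*}
with an explicit constant. I would prove it by the elementary Hall--Tenenbaum argument for nonnegative multiplicative functions $f$ with $f(p^k)\leq 1$. Write $\log n=\sum_{p^k\| n}\log p^k$ and bound $\sum_{n\leq N} f(n)\log n$ by $N\sum_{m\leq N}f(m)/m$ times a constant coming from Chebyshev/Mertens-type estimates $\sum_{p^k\leq x}\log p^k/p^k\ll \log x$. For $n\leq N/\log N$ one simply uses the trivial count; for larger $n$ one has $\log n\geq \log N-\log\log N$. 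Combining the two ranges gives
\begin{equation*}
\sum_{n\leq N} f(n) \ll \frac{N}{\log N}\sum_{m\leq N}\frac{f(m)}{m}\leq \frac{N}{\log N}\prod_{p\leq N}\Bigl(1+\frac{z}{p-1}\Bigr).
\end{equation*}
Explicit Mertens bounds (Rosser--Schoenfeld style) then give $\prod_{p\leq N}(1+z/(p-1))\leq e^{zB'}(\log N)^z$ up to an explicit factor. I expect the constants $4/\epsilon$ and $(e/(1-\epsilon))^{4.096}$ in \eqref{eq:c_epsilon_def} to arise from this bookkeeping, with the exponent $4.096$ presumably an explicit Mertens/Chebyshev constant. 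The small-$N$ range is handled by noting that for small $N$ the right side of \eqref{eq:divisors_Durkan} is negative or trivially satisfied.

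The optimization step is then routine. Substituting gives
\begin{equation*}
z^{-t}(\log N)^{z-1}=\exp\bigl(L\,[-(1-\epsilon)\log(1-\epsilon)-\epsilon]\bigr).
\end{equation*}
Expanding $-\log(1-\epsilon)=\sum_{k\ge1}\epsilon^k/k$ yields
\begin{equation*}
-(1-\epsilon)\log(1-\epsilon)-\epsilon=-\sum_{k\geq2}\frac{\epsilon^k}{k(k-1)}\leq-\frac{\epsilon^2}{2}.
\end{equation*}
This gives the saving $(\log N)^{-\epsilon^2/2}$. Since $t$ need not be an integer, there is a harmless rounding, costing at most a factor $z^{-1}=1/(1-\epsilon)$, which is absorbed into $c_\epsilon$.

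The main obstacle is the explicit, fully effective version of $\sum_{n\leq N}z^{\omega(n)}\ll N(\log N)^{z-1}$ valid for all $N\geq3$, uniformly in $z=1-\epsilon$. Here one must track every constant in Chebyshev's and Mertens' estimates and in the splitting at $N/\log N$. I would first try the cleanest version of the Hall--Tenenbaum inequality, $\sum_{n\le N}f(n)\le (A+1)\frac{N}{\log N}\sum_{n\le N} f(n)/n$ with $A$ an explicit bound for $\sum_{p^k\le x}\log(p^k)/(x)$-type sums. If the resulting constant exceeds $c_\epsilon$, I would instead absorb the losses by allowing the factor $4/\epsilon$.
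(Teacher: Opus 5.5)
Your plan is correct, but it takes a genuinely different route from the paper. The paper never estimates $\sum_{n\le N}z^{\omega(n)}$. Instead it takes Durkan's explicit level-set bound $\#\{n\le N:\omega(n)=k\}\le 1.95\,\frac{N}{\log N}\,\frac{(\log\log N+4.096)^{k-1}}{(k-1)!}$ as a black box. It sums this bound over $k-1<K=(1-\epsilon)(\log\log N+4.096)$, dominates the sum by its last term times $\sum_{i\ge0}(1-\epsilon)^i=1/\epsilon$, and uses $K!>(K/e)^K$ to reach $(e/(1-\epsilon))^K$. So your guess about where $c_\epsilon$ comes from is off: $4.096$ is Durkan's constant, and $1/\epsilon$ is that geometric series, not Mertens bookkeeping.

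Both routes land on the same exponent $\epsilon+(1-\epsilon)\log(1-\epsilon)\ge\epsilon^2/2$. This is because $z=1-\epsilon$ is exactly the Chernoff parameter for a Poisson lower tail at $(1-\epsilon)\log\log N$, and the paper's Stirling step reproduces that tail.

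\begin{itemize}
\item The paper's route is short, at the cost of relying on Durkan's preprint and of a constant that blows up as $\epsilon\to0$ or $\epsilon\to1$.
\item Your route is self-contained from classical explicit estimates and gives an $\epsilon$-independent constant. Take the Halberstam--Richert inequality $\sum_{n\le x}f(n)\le(A+B+1)\frac{x}{\log x}\sum_{n\le x}\frac{f(n)}{n}$ with $A=1.01624$ (from Chebyshev's $\sum_{p\le y}\log p<1.01624\,y$) and $B=\sum_p\sum_{k\ge2}\log p^k/p^k<2$. Combine it with $\sum_{p\le x}1/(p-1)\le\log\log x+1.9$ for $x\ge3$ in the Euler product. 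This yields a constant of roughly $4e^{2}\approx 30$, far below $c_\epsilon>4e^{4.096}>240$, so the theorem follows a fortiori.
\end{itemize}

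Your small-$N$ and rounding remarks are then unnecessary. In particular, $\mathbf{1}_{\omega(n)<t}\le z^{\omega(n)-t}$ needs no integrality of $t$.

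One slip to fix. The input to the Hall--Tenenbaum step is the Chebyshev-type bound $\sum_{p^k\le y}f(p^k)\log p^k\le(A+B)y$, not a Mertens estimate $\sum_{p^k\le x}\log p^k/p^k\ll\log x$, which would cost a logarithm. Mertens enters only through the Euler product $\prod_{p\le N}(1+z/(p-1))$.
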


The proof of Theorem \ref{thm:divisors_Durkan} combines the elementary bound $\tau(n)\geq 2^{\omega(n)}$ with an upper bound of Durkan \cite{Dur23} for the level sets of $\omega(n)$, followed by an elementary lower-tail estimate. The exceptional term $c_\epsilon/\log(N)^{\epsilon^2/2}$ decays as a negative power of $\log(N)$; the bound \eqref{eq:divisors_Durkan} holds for all $N\geq 3$, but is nontrivial only once $\log(N)^{\epsilon^2/2}>c_\epsilon$, that is, once $N>\exp\!\big(c_\epsilon^{2/\epsilon^2}\big)$. We have made no attempt to optimise the constant $c_\epsilon$.

Combining Theorem \ref{thm:Largest_SCC_abstract_bound} with Theorem \ref{thm:divisors_Durkan} yields the following lower bound for the expected size of the largest strongly connected component when $N$ is sufficiently large:

\begin{corollary}\label{cor:Explicit_Largest_SCC_large_N}
    Let $\epsilon\in(0,1)$ and $\rho\in(0,1)$, and let $c_\epsilon$ be as in \eqref{eq:c_epsilon_def}. For each $N\geq 3$, define
    \begin{equation}
        x_N \defeq \log(N)^{\log(2)(1-\epsilon)}-2.
    \end{equation} Then 
    \begin{equation}
        \bE[\#\Phi_{}(\cD_{\rho}(N))] \geq  N \left( 1 - \frac{c_\epsilon}{\log(N)^{\epsilon^2/2}}\right) \left(1 - \rho(2\rho-\rho^2)^{x_N} - (1-\rho)(1-\rho^2)^{x_N}\right).
    \end{equation}
\end{corollary}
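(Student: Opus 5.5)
The plan is to derive the corollary directly from Theorem \ref{thm:Largest_SCC_abstract_bound}, choosing its parameters via Theorem \ref{thm:divisors_Durkan}. Fix $\epsilon,\rho\in(0,1)$ and $N\geq 3$. Take $x = x_N = \log(N)^{\log(2)(1-\epsilon)}-2$ and
\begin{equation*}
y = y_N \defeq \max\Bigl\{0,\; N - \frac{c_\epsilon N}{\log(N)^{\epsilon^2/2}}\Bigr\}.
\end{equation*}
The condition $\tau(n)-2\geq x_N$ is equivalent to $\tau(n)\geq \log(N)^{\log(2)(1-\epsilon)}$. Therefore Theorem \ref{thm:divisors_Durkan} gives exactly the hypothesis
\begin{equation*}
\#\{n\leq N:\tau(n)-2\geq x_N\}\geq y_N.
\end{equation*}
When the bound of Theorem \ref{thm:divisors_Durkan} is negative, the hypothesis with $y_N=0$ holds trivially.

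Applying Theorem \ref{thm:Largest_SCC_abstract_bound} then yields
\begin{equation*}
\bE[\#\Phi(\cD_\rho(N))]\geq y_N\bigl(1-\rho(2\rho-\rho^2)^{x_N}-(1-\rho)(1-\rho^2)^{x_N}\bigr).
\end{equation*}
When $y_N = N(1-c_\epsilon/\log(N)^{\epsilon^2/2})$, this is the claimed inequality verbatim.

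The only subtlety, which I expect to be the main point needing care, is sign bookkeeping. Both factors on the right of the corollary may be negative, so their product could be positive while the true bound we obtain is weaker. Two cases need checking.

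\textbf{Second factor.} Write $F = 1-\rho(2\rho-\rho^2)^{x_N}-(1-\rho)(1-\rho^2)^{x_N}$. Here $x_N$ may be negative for small $N$, because $\log(N)^{\log 2(1-\epsilon)}$ can be below $2$. In that case the bases $2\rho-\rho^2$ and $1-\rho^2$ lie in $(0,1)$, so the powers exceed $1$ and $F$ can be negative.

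\textbf{First factor.} If $c_\epsilon/\log(N)^{\epsilon^2/2}>1$, the first factor is negative.

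If the first factor is nonnegative and $F\ge 0$, the argument above is complete. If the first factor is nonnegative and $F<0$, the right-hand side is $\le 0$, and the claim follows from $\bE[\#\Phi]\geq 1\geq 0$.

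If the first factor is negative while $F<0$, the product is positive and the claim is not trivially true. I would rule this configuration out. Note that $c_\epsilon\geq 4e^{4.096}/\epsilon$ is enormous, so the first factor is negative unless $\log N$ is astronomically large, which in turn forces $x_N$ to be large. However, this does not immediately exclude $x_N<0$ together with a negative first factor; for small $N$ both happen. In that configuration the product of two negatives is positive, and one must verify it is at most $1\leq \bE[\#\Phi]$.

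So I would bound the product in that range directly, using $x_N\geq -2$. This gives $F\ge 1-\rho(2\rho-\rho^2)^{-2}-\dots$, which can be very negative as $\rho\to0$. That may fail, so the honest fix is to note that the corollary is intended for $N$ large, namely $\log(N)^{\epsilon^2/2}\ge c_\epsilon$. If necessary, I would interpret the statement with $x_N\geq 0$, or state that restriction explicitly. In the range $\log(N)^{\epsilon^2/2}\ge c_\epsilon$ we have $\log N\ge c_\epsilon^{2/\epsilon^2}$, which is huge, so $x_N>0$ and both factors are nonnegative, and the derivation above applies verbatim.
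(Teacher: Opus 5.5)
Your approach is the same as the paper's. Its proof is a one-line substitution of Theorem~\ref{thm:divisors_Durkan} into Theorem~\ref{thm:Largest_SCC_abstract_bound} with $x=x_N$ and $y=N(1-c_\epsilon/\log(N)^{\epsilon^2/2})$, and it pays no attention to signs. Your sign bookkeeping is exactly what the paper skips. Your suspicion about the last configuration is also right: the inequality there is not merely unproved but false.

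Take $N=3$ and $\rho=1/2$. Since $G_3$ is a tree, $\bE[\#\Phi(\cD_{1/2}(3))]=1$. For the second factor: $1<\log(3)^{\log(2)(1-\epsilon)}<1.07$, so $x_3\in(-1,-0.93)$, and the factor equals $1-(4/3)^{-x_3}<-0.3$. For the first factor: $c_\epsilon>4e^{4.096}>240$ while $\log(3)^{\epsilon^2/2}<1.05$, so the factor is below $3(1-240/1.05)<-680$. Hence the right-hand side exceeds $200$ for every $\epsilon\in(0,1)$.

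The defect already sits in Theorem~\ref{thm:Largest_SCC_abstract_bound}. Its final summation step is only justified when $y\ge 0$ or $x\ge 0$, because for $y<0$ the hypothesis is vacuous; the same data ($N=3$, $\rho=1/2$, $x=x_3$, $y=y_3$) violate its conclusion. So the corollary genuinely needs a restriction of the kind you propose. Any of the following works: $\log(N)^{\epsilon^2/2}\ge c_\epsilon$; or $x_N\ge 0$; or replacing the first factor by its positive part, which is your choice of $y_N$. Your case analysis proves the corollary under each of these.

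One correction: your closing claim that $\log(N)^{\epsilon^2/2}\ge c_\epsilon$ forces $x_N>0$ is false for $\epsilon$ near $1$. We have $x_N\ge 0$ iff $\log\log N\ge 1/(1-\epsilon)$, whereas the first factor is nonnegative iff $\log\log N\ge 2\log(c_\epsilon)/\epsilon^2$. For $\epsilon=0.99$ these thresholds are $100$ and about $49.7$. This does not damage your argument, since you already disposed of ``first factor $\ge 0$, $F<0$'' trivially. The case you did not list, ``first factor $<0$, $F\ge 0$'', is equally trivial. Still, that sentence should be dropped.
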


We also provide a complementary bound  (Corollary \ref{cor:Explicit_Largest_SCC_Primorial}) for arbitrary $N$, at the cost of weaker asymptotics.
This bound is a consequence of Theorem \ref{thm:Largest_SCC_abstract_bound} and Proposition \ref{prop:primorial_bound}. 

\begin{corollary}\label{cor:Explicit_Largest_SCC_Primorial}
    Let $N\geq 1$, $\rho\in(0,1)$, and $x\in \Z_{\geq 0}$. Let $d=\lceil\log_2(x+2)\rceil$, and let $p_d\prim$ denote the product of the first $d$ primes. Then 
    \begin{equation}
        \bE[\#\Phi_{}(\cD_{\rho}(N))] \geq  \left\lfloor \frac{N}{p_{d}\prim} \right\rfloor \left(1 - \rho(2\rho-\rho^2)^{x} - (1-\rho)(1-\rho^2)^{x}\right).
    \end{equation}
\end{corollary}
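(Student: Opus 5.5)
The corollary follows by feeding a suitable counting statement into Theorem \ref{thm:Largest_SCC_abstract_bound}, with $y=\lfloor N/p_d\prim\rfloor$. The counting statement, which is what Proposition \ref{prop:primorial_bound} presumably provides, is
\begin{equation}
    \#\{n\leq N : \tau(n)-2\geq x\} \geq \left\lfloor \frac{N}{p_d\prim}\right\rfloor .
\end{equation}
Once this is in hand, Theorem \ref{thm:Largest_SCC_abstract_bound} with this $x$ and $y$ gives exactly the stated inequality.

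To prove the counting bound directly, I would look at the multiples of the primorial. These are the integers $n=k\,p_d\prim$ with $1\leq k\leq \lfloor N/p_d\prim\rfloor$; all of them are at most $N$, and they are distinct, so there are precisely $\lfloor N/p_d\prim\rfloor$ of them. Each such $n$ is divisible by the first $d$ primes, so $\omega(n)\geq d$. Using the elementary bound $\tau(n)\geq 2^{\omega(n)}$, we get
\begin{equation}
    \tau(n)\geq 2^{d} = 2^{\lceil \log_2(x+2)\rceil}\geq x+2 ,
\end{equation}
which is the same as $\tau(n)-2\geq x$. This proves the counting bound.

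Two edge cases need a word. If $N<p_d\prim$, then $y=0$ and the claimed inequality reduces to the trivial bound $\bE[\#\Phi(\cD_\rho(N))]\geq 0$. If $x=0$, then $d=1$ and the bracketed factor equals $1-\rho-(1-\rho)=0$, so again the statement is trivial. In both cases it is still consistent with the theorem, since the hypothesis holds with $y=0$.

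There is no real obstacle here. The only point that needs care is checking that the ceiling in the definition of $d$ gives $2^d\geq x+2$, which is immediate. One should also confirm that Theorem \ref{thm:Largest_SCC_abstract_bound} is stated for real $x\geq 0$ and $y\geq 0$, so that an integer $x$ and a possibly zero $y$ are admissible inputs.
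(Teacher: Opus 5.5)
Your proposal is correct and is essentially the paper's argument: the corollary is Theorem~\ref{thm:Largest_SCC_abstract_bound} applied with $y=\lfloor N/p_d\prim\rfloor$. The required count comes from Proposition~\ref{prop:primorial_bound} with $D=x+2$, proved exactly as you do by taking multiples of $p_d\prim$; the only cosmetic difference is that the paper uses $\tau(m\,p_d\prim)\geq \tau(p_d\prim)=2^d$ directly, whereas you go through $\tau(n)\geq 2^{\omega(n)}\geq 2^d$.
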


Finally, we investigate the diameter of randomly oriented divisor graphs. 
The diameter of the divisor graph $G_N$ is $2$ (whenever $N\geq 3$), since every vertex is connected to the vertex $1$, and the shortest path between $N$ and $N-1$ has length $2$. 
We conjecture that the expected size of the diameter of the largest strongly connected component of the randomly oriented divisor graph is much larger.

\begin{conjecture}\label{conj:diameter}
    The expected value of the diameter of the largest strongly connected component of $\cD_\rho(N)$ grows like $\log(N)$ asymptotically, i.e., there exists a constant $c_\rho$, depending only on $\rho$, such that
    \begin{equation}
        \lim_{N\to\infty} \frac{\bE\left[\textnormal{Diameter of $\cD_\rho(N)$}\right]}{\log(N)}=c_\rho.
    \end{equation}
\end{conjecture}
In Section \ref{sec:simulations} we discuss computational support for this conjecture.
Our code for running simulations for randomly oriented divisor graphs can be found in the \texttt{GitHub} repository \cite{KP26}.

\subsection{Organization}

In Section \ref{sec:setup} the precise definition of randomly oriented divisor graphs is given.
In Section \ref{sec:SCC} the largest strongly connected component of a randomly oriented graph is defined as a random variable. 
In Section \ref{sec:LSCC_Thm} Theorem \ref{thm:Largest_SCC_abstract_bound} is proven using basic facts about graphs and probability. 
In Section \ref{sec:divisors} Theorem \ref{thm:divisors_Durkan} and Proposition \ref{prop:primorial_bound} are proven. 
In Section \ref{sec:simulations} we provide computation data from simulations, for both the largest strongly connected component and the diameter. 
Finally, in Section \ref{sec:questions} we pose several open questions regarding randomly oriented divisor graphs.

\subsection*{Acknowledgments}
Jihyung Kim (JK) would like to thank Keeheon Lee for introducing him to network science, which motivated him to start this research project.
%
 %
%
Tristan Phillips (TP) is very thankful for JK's generosity in sharing with him his ideas for this project.
 We would also like to thank Aidan Hennessey, Andrew Kobin, and  Alex Moon for interesting conversations related to randomly oriented divisor graphs. 
Special thanks to Carl Pomerance for pointing out the reference \cite{Dur23} and sketching the proof of Theorem \ref{thm:divisors_Durkan}, which significantly improved upon the bound in our original version of the theorem.
JK was supported by a research scholarship from the William H.\@ Neukom Institute for Computational Science at Dartmouth College.
 TP was supported by the National Science Foundation, via grant DMS-2303011.

\section{Setup and definitions}\label{sec:setup}

In this section we give a precise definition of randomly oriented divisor graphs. 

\begin{definition}
    A \textbf{(simple) graph} $G$ is a pair $(V,E)$, where:    \begin{itemize}
        \item $V$ is a set of elements called \textbf{vertices}.
        \item $E$ is a set of two element subsets of $V$, called \textbf{edges}. 
    \end{itemize}
\end{definition}

\begin{definition}
    A \textbf{directed graph} $D$ is a pair $(V,E)$, where:    \begin{itemize}
        \item $V$ is a  set of elements called \textbf{vertices}.
        \item $E$ is a set of ordered pairs of distinct vertices, called \textbf{(directed) edges}. 
    \end{itemize}
\end{definition}

\begin{definition}
    An \textbf{oriented graph} is a directed graph $D=(V,E)$ with the property that  if $(v_1,v_2)\in E$ then $(v_2,v_1)\not\in E$. 
\end{definition}

\begin{definition}
    An \textbf{orientation} of a graph $G=(V,F)$ is an oriented graph $D=(V,E)$  with the property that if $(v_1,v_2)\in E$, then $\{v_1,v_2\}\in F$, and if $\{w_1,w_2\}\in F$, then $(w_1,w_2)$ or $(w_2,w_1)$ are in $E$.
\end{definition}

Let $G=(V,F)$ be a graph. Let $\Omega\defeq \Omega(G)$ denote the set of all orientations of the underlying graph $G$.
Let $D=(V,E)\in \Omega(G)$ be a fixed orientation on $G$. 

 Define a function
\begin{salign}
    f: \Omega &\to \Z_{\geq 0}\\
    D'=(V,E') &\mapsto \#\{(w,v)\in E' : (v,w)\in E\}.
\end{salign}
This function counts the number of edges whose orientations differ between $D$ and $D'$. 

\begin{definition}
For $\rho\in[0,1]$, the \textbf{randomly oriented graph} $\cD_\rho=(D,\rho)$ is a random variable defined on $\Omega$, determined by
\begin{equation}
    \bP(\cD_\rho = D') = \rho^{f(D')}(1-\rho)^{\# E - f(D')}.
\end{equation}
\end{definition}


   The \textbf{divisor graph} $G_N=(V_N,F_N)$ is the graph on $N$ vertices, labeled $1$ through $N$, with an edge between vertices labeled $a$ and $b$ if and only if $a|b$ or $b|a$. 

   The \textbf{oriented divisor graph} $D_N=(V_N,E_N)$ is the orientation on $G_N$, were the edges are oriented so that if $a>b$ and $b|a$, then $(a,b)\in E_N$. 

    The \textbf{randomly oriented divisor graph} $\cD_\rho(N)$ is the randomly oriented graph $(D_N,\rho)$.

\section{Strongly connected components}\label{sec:SCC}

In this section we define the largest strongly connected component of a randomly oriented graph as a random variable an the set of orientations of a fixed graph. We give some examples of the expected value of the largest strongly connected component of some randomly oriented divisor graphs.

\begin{definition}
    A directed graph is \textbf{strongly connected} if there is a directed path between each pair of its vertices.
\end{definition}

\begin{definition}
    A \textbf{strongly connected component} of a directed graph is a subgraph that is strongly connected and is not contained in a larger strongly connected subgraph.
\end{definition}

Strongly connected components give a partition of the vertices of a directed graph. For a finite oriented graph $D=(V,E)$, let $C(D)=\{(V_1,E_1),\dots,(V_n,E_n)\}$ be the set of subgraphs with the property that each $(V_i, E_i)$ is strongly connected and $V=\bigsqcup_{i\leq n} V_i$. We will refer to elements of $C(D)$ as \textbf{components} of $D$. 
For $v\in V$ let $(V_v, E_v)$ denote the component $(V_i, E_i)$ of $D$ for which $v\in V_i$. 

Define the map
\begin{salign}
    \Phi_v : \Omega(G) &\to \bigcup_{H\subseteq G} \Omega(H) \\
    D' &\mapsto (V_v, E_v) \in C(D'),
\end{salign}
which maps an orientation of the graph $G$ to the strongly connected component containing the vertex $v$. 
Then $\Phi_v(\cD_\rho)$ defines a random variable on $\Omega$, which we call the \textbf{strongly connected component of $\cD_\rho$ containing $v$}. 

Define the map
\begin{salign}
    \#\Phi_v : \Omega(G) &\to \R \\
    D' &\mapsto\# V_v,
\end{salign}
which maps an orientation of the graph $G$ to the size of the strongly connected component containing $v$. 
Then $\#\Phi_v(\cD_\rho)$ defines a random variable on $\Omega$, which we call the \textbf{size of the strongly connected component of $\cD_\rho$ containing $v$}.

Define the map 
    \begin{salign}
        \#\Phi_{}:\Omega &\to \R\\
                D' &\mapsto \max_{(V_i,E_i)\in C(D')} \# V_i.
    \end{salign}
Then $\#\Phi_{}(\cD_\rho)$ defines a random variable on $\Omega$, which we call the \textbf{size of the largest strongly connected component} of the randomly oriented graph $\cD_\rho$. 

We now consider the randomly oriented divisor graph $\cD_\rho(N)=(D_N,\rho)$. 
The main object of study in this article is the expected value of the largest strongly connected component of $\cD_\rho(N)$, which we denote by $\bE[\#\Phi_{}(\cD_\rho(N))]$. This can be explicitly computed for small values of $N$.

\begin{example}
    In this example we compute the expected value of the size of the largest strongly connected component of the randomly oriented divisor graph $\cD_\rho(5)$. 
    Consider the oriented divisor graph $D_5$, illustrated in Figure \ref{fig:5_divisor_graph}. 
    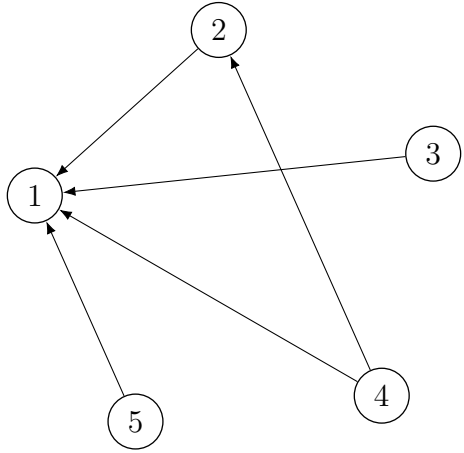
\begin{figure}[htbp]
        \centering
        \begin{tikzpicture}[scale=.7,-{Latex}]
            \SetGraphUnit{4}
            \GraphInit[vstyle=normal]
            \begin{scope}[rotate=240]
            \Vertices[Lpos=45]{circle}{5,4,3,2,1}
            \end{scope}
            \path (2) edge (1);
            \path (3) edge (1);
            \path (4) edge (1);
            \path (5) edge (1);
            \path (4) edge (2);
        \end{tikzpicture}
        \caption{Oriented divisor graph $D_5$.}
            \label{fig:5_divisor_graph} 
    \end{figure}
    In the corresponding randomly oriented graph $\cD_\rho(5)$, there are two possibilities for the size of the largest strongly connected component, $1$ or $3$. The latter case occurs precisely when the triangle with vertices $1,2,4$ is strongly connected. Finding the expected size of the largest strongly connected component is a straightforward computation:
    \begin{salign}
        \bE[\#\Phi_{}(\cD_\rho(5))]
       & = 3(\rho^2(1-\rho) + \rho(1-\rho)^2) + 1(1-(\rho^2(1-\rho) + \rho(1-\rho)^2))\\
        & = 3(\rho-\rho^2) + 1(1-\rho+\rho^2)\\
        & = 1 + 2\rho - 2\rho^2.
    \end{salign}
When $\rho=1/2$, we find that the average size of the largest strongly connected component is $\bE[\#\Phi_{}(\cD_{1/2}(5))]=3/2$.  

Additional examples of  $\bE[\#\Phi_{}(\cD_\rho(N))]$ for small $N$ can be found in Table \ref{tab:LSCC_polynomials}.
\end{example}

\begin{table}[h]
\centering
\renewcommand{\arraystretch}{1.2}
\begin{tabular}{cc}
\toprule
$N$ & $\bE[\#\Phi_{}(\cD_{\rho}(N))]$\\
\midrule
$1,2,3$ & $1$ \\
$4, 5$ & $1 + 2\rho - 2\rho^2$ \\
$6,7$ & $1 + 5\rho + 2\rho^2 - 18\rho^3 + 19\rho^4 - 12 \rho^5 + 4\rho^6$\\
$8$ & $1 + 10\rho - 4\rho^2 - 23\rho^3 + 43\rho^4 - 49\rho^5 + 35\rho^6 -16\rho^7 + 4\rho^8$\\
$9$ & $1 + 12\rho - 6\rho^2 - 18\rho^3 + 17\rho^4 + 10\rho^5 - 36\rho^6 + 28\rho^7 - 7\rho^8$\\
%
\bottomrule
\end{tabular}
\caption{Expected value of largest strongly connected component of $\cD_\rho(N)$ for small $N$.}
\label{tab:LSCC_polynomials}
\end{table}

\section{Proof of Theorem \ref{thm:Largest_SCC_abstract_bound}}\label{sec:LSCC_Thm}

In this section we prove Theorem \ref{thm:Largest_SCC_abstract_bound}.

\begin{proof}[Proof of Theorem \ref{thm:Largest_SCC_abstract_bound}.]
    As the size of the component containing $1$ is at most the size of the largest strongly connected component, $\bE[\#\Phi(\cD_\rho(N))]\geq \bE[\#\Phi_{1}(\cD_\rho(N))]$. 
    It therefore suffices to give a lower bound for $\bE[\#\Phi_{1}(\cD_\rho(N))]$. 
    
    Let $m\leq N$, and let 
    \begin{equation}
        d_0=1<d_1<\cdots<d_{\tau(m)-2}<d_{\tau(m)-1}=m
    \end{equation}
    denote the divisors of $m$. 

\begin{figure}[htbp]
    \centering
    \begin{tikzpicture}[
        vertex/.style={circle, draw, minimum size=1.2cm, inner sep=0pt, font=\large},
        mid arrow/.style={postaction={decorate,decoration={
            markings,
            mark=at position .55 with {\arrow{Stealth[length=3mm]}}
          }}}
    ]
    
        \node[vertex] (m) at (0, 3) {$m$};
        \node[vertex] (one) at (0, -3) {$1$};
    
        \node[vertex] (d1) at (-3.5, 0) {$d_1$};
        \node[vertex] (d2) at (-1.5, 0) {$d_2$};
        \node[font=\Large] (dots) at (.5, 0) {$\dots$};
        \node[vertex] (dt) at (2.5, 0) {$d_{\tau(m)-2}$};
    
        \draw[mid arrow] (m) -- (d1);
        \draw[mid arrow] (m) -- (d2);
        \draw[mid arrow] (m) -- (dt);
        
    
        \draw[mid arrow] (d1) -- (one);
        \draw[mid arrow] (d2) -- (one);
        \draw[mid arrow] (dt) -- (one);
        
    
        \draw[mid arrow] (m) to[out=180, in=180, looseness=2.5] (one);
    
    \end{tikzpicture}
    \caption{Subgraph of the oriented divisor graph consisting of vertices corresponding to the divisors of $m$.}
        \label{fig:m_divisor_subgraph} 
\end{figure}
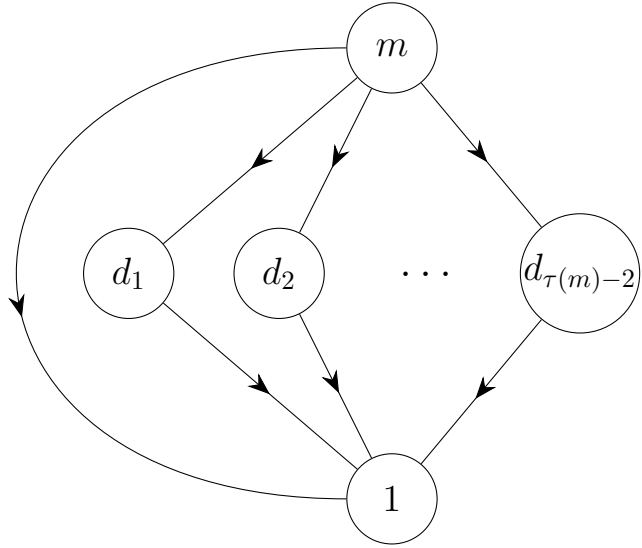
We compute the probability that $m$ is in the same strongly connected component as $1$. We have 
\begin{equation}
    \bP\left[m\in \Phi_1(\cD_\rho(N))\right]
    = 1 - \bP\left[m\not\in \Phi_1(\cD_\rho(N))\right].
\end{equation}
Note that $\bP\left[m\not\in \Phi_1(\cD_\rho(N))\right]$ is bounded above by the probability that none of the triangles with vertices $1$, $d_i$, and $m$ are strongly connected. Conditioning on the direction of the edge between $1$ and $m$, the triangles involve distinct remaining edges and are therefore independent. We use this observation to compute the probability that none of these triangles are strongly connected,
\begin{salign}
      & \bP\left[(1,m)\in \cD_\rho(N)\right] \prod_{i=1}^{\tau(m)-2} \left(1 - \bP\left[(m,d_i), (d_i, 1)\in \cD_\rho(N)\right]\right)\\
      &\hspace{2cm} + \bP\left[(m,1)\in \cD_\rho(N)\right] \prod_{i=1}^{\tau(m)-2} \left(1 - \bP\left[(d_i,m), (1, d_i)\in \cD_\rho(N)\right]\right)\\
      &\ = \rho (1-(1-\rho)^2)^{\tau(m)-2} + (1-\rho)(1-\rho^2)^{\tau(m)-2}\\
      &\ = \rho(2\rho-\rho^2)^{\tau(m)-2} + (1-\rho)(1-\rho^2)^{\tau(m)-2}.
\end{salign}
Therefore
\begin{equation}\label{eq:m_in_component_1}
    \bP\left[m\in \Phi_1(\cD_\rho(N))\right] \geq 1 - \rho(2\rho-\rho^2)^{\tau(m)-2} - (1-\rho)(1-\rho^2)^{\tau(m)-2}.
\end{equation}

 By the bound (\ref{eq:m_in_component_1}) and the assumption that there are at least $y$ integers $m\leq N$ with $\tau(m)-2\geq x$,
\begin{equation}
    \bE[\#\Phi_{1}(\cD_\rho(N))]
    \geq \sum_{n\leq N}\bP\left[n\in \Phi_1(\cD_\rho(N)\right]
    \geq y \left(1 - \rho(2\rho-\rho^2)^{x} - (1-\rho)(1-\rho^2)^{x}\right).
\end{equation}
\end{proof}

\begin{remark}
    In the above proof we only consider paths between $1$ and $m$ of length at most $2$. It would be interesting to consider longer paths, perhaps using inclusion-exclusion or a recursive argument, which would strengthen the bound \eqref{eq:LSCC_bound} in Theorem \ref{thm:Largest_SCC_abstract_bound}.
\end{remark}

\section{Effective version of a result of Hardy and Ramanujan}\label{sec:divisors}


In this section we prove Theorem \ref{thm:divisors_Durkan} and Proposition \ref{prop:primorial_bound}. 

Recall that $\tau(n)$ denotes the number of positive divisors of $n$ (e.g., $\tau(10)=\#\{1,2,5,10\}=4$).
The number of edges of the divisor graph $G_N$ is
\begin{equation}
    \sum_{n=1}^N (\tau(n)-1) = -N + \sum_{n=1}^N \tau(n).
\end{equation}
Let $\gamma$ denote the Euler--Mascheroni constant.
In 1849 Dirichlet \cite{Dir51} proved that
\begin{equation}
    \sum_{n=1}^N \tau(n) = N\log(N) + (2\gamma - 1)N + O(\sqrt{N}),
\end{equation}
 (see also \cite[page 22]{IK04}).
From this we obtain an approximation for the average degree of a vertex in $G_N$.

\begin{proposition}
    The average degree of a vertex in the divisor graph $G_N$ is
    \begin{equation}
        2\log(N) + 2(2\gamma - 3) + O(N^{-1/2}).
    \end{equation}
\end{proposition}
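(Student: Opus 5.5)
The plan is to use the handshake lemma together with the edge count and Dirichlet's estimate quoted just above. The average degree of a vertex of $G_N$ is $\frac{1}{N}\sum_{v}\deg(v)=\frac{2\#F_N}{N}$, since each edge contributes to the degree of exactly two vertices. So the whole argument reduces to computing $\#F_N$ with an error of $O(\sqrt{N})$ and dividing by $N/2$.

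\emph{Step 1.} Count the edges. Each edge $\{a,b\}$ of $G_N$ with $a<b$ corresponds to exactly one pair consisting of a vertex $b\le N$ and a proper divisor $a$ of $b$. Hence
\begin{equation*}
\#F_N=\sum_{n\le N}(\tau(n)-1)=\sum_{n\le N}\tau(n)-N,
\end{equation*}
which is the identity displayed before the proposition. I will check this against the small cases: $N=5$ gives $1+2+2+3+2-5=5$ edges, matching Figure~\ref{fig:5_divisor_graph}.

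\emph{Step 2.} Substitute Dirichlet's asymptotic $\sum_{n\le N}\tau(n)=N\log N+(2\gamma-1)N+O(\sqrt N)$. This gives
\begin{equation*}
\#F_N=N\log N+(2\gamma-2)N+O(\sqrt N).
\end{equation*}

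\emph{Step 3.} Multiply by $2/N$. The error term $O(\sqrt N)$ becomes $O(N^{-1/2})$, and the main terms become $2\log N$ plus a constant.

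The main obstacle is the constant term, not the method. Carried out exactly as above, Step 3 gives the constant $2(2\gamma-2)=4\gamma-4$, whereas the statement asserts $2(2\gamma-3)$. These differ by $2$, which would correspond to subtracting $2N$ rather than $N$ in Step 1.

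Before finalizing, I would recheck two things:
\begin{itemize}
\item whether a different convention for ``degree'' or for the edge set is intended (for instance, discounting the edge to the vertex $1$);
\item the constant in Dirichlet's formula, which is $2\gamma-1$ in \cite{IK04}.
\end{itemize}
Since $4\gamma-4\approx -1.69$ is easy to test against direct computation of $2\#F_N/N$ for moderate $N$, I would run that check.

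Under the paper's own edge count (Step 1) and the cited form of Dirichlet's theorem, Steps 1--3 prove the proposition with constant term $2(2\gamma-2)$. If that computation confirms $4\gamma-4$, the stated constant $2(2\gamma-3)$ should be corrected to $2(2\gamma-2)$. The leading term $2\log N$ and the error $O(N^{-1/2})$ are unaffected.
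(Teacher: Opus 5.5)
Your argument is the same as the paper's: the paper justifies the proposition only by the edge count $\#F_N=\sum_{n\le N}(\tau(n)-1)$ together with Dirichlet's formula. Carried out correctly, this gives $2\log N+2(2\gamma-2)+O(N^{-1/2})$. Your suspicion is right: the stated constant $2(2\gamma-3)$ is an arithmetic slip, not a different convention. Summing degrees directly as $(\tau(n)-1)+(\lfloor N/n\rfloor-1)$ and using $\sum_{n\le N}\lfloor N/n\rfloor=\sum_{n\le N}\tau(n)$ gives the same quantity $\frac{2}{N}\sum_{n\le N}\tau(n)-2$.

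The numerical check you propose settles it. Since $\sum_{n\le 1000}\tau(n)=7069$, the average degree of $G_{1000}$ is $2\cdot 6069/1000=12.138$. This matches $2\log 1000+4\gamma-4\approx 12.124$, whereas the stated constant gives $2\log 1000+4\gamma-6\approx 10.124$.
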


\begin{definition}\label{def:normal_order}
    Let $f:\Z_{>0}\to \C$ and $g:\Z_{>0}\to \C$ be arithmetic function. Then $g$ is a \textbf{normal order} of $f$ if for each $\epsilon\in \R_{>0}$ there exists a subset $S_{\epsilon}\subset \Z_{\geq 1}$ of density one\footnote{That is, $\lim_{n\to\infty} \frac{\#\{s\in S_{\epsilon} : s\leq n\}}{n}=1$.} such that for each $n\in S_{\epsilon}$
    \begin{equation}
        (1-\epsilon) g(n)\leq f(n) \leq (1+\epsilon)g(n).
    \end{equation}
\end{definition} 

A consequence of a result of Hardy and Ramanujan \cite{HR17} shows that $\log(\tau(n))$ has normal order $\log(2)\log\log(n)$. Theorem \ref{thm:divisors_Durkan} proves an effective version of this result.

Let $\omega(n)$ denote the number of distinct prime divisors of $n$ (e.g., $\omega(12)=\#\{2,3\}=2$). Note that $\tau(m)\geq 2^{\omega(m)}$. 
The proof of Theorem \ref{thm:divisors_Durkan} makes use of a recent effective version of the Hardy--Ramanujan theorem due to Durkan \cite{Dur23}.

\begin{lemma}[{\cite[Theorem 1.2]{Dur23}}]\label{lem:durkan_level}
    For every $N\geq 3$ and every integer $k\geq 1$,
    \begin{equation}\label{eq:durkan_level}
        \#\{n\leq N : \omega(n)=k\} \leq 1.95\,\frac{N}{\log(N)}\,\frac{\left(\log\log(N)+4.096\right)^{k-1}}{(k-1)!}.
    \end{equation}
\end{lemma}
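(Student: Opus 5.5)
Since Lemma~\ref{lem:durkan_level} is quoted from \cite{Dur23}, we only sketch how we would prove it: by the classical Hardy--Ramanujan induction on $k$, made explicit using Rosser--Schoenfeld type bounds. Write $\pi_k(N)\defeq\#\{n\le N:\omega(n)=k\}$ and $L\defeq\log\log(N)+4.096$. We aim to prove by induction on $k$ that, for every $N\ge 3$,
\begin{equation*}
\pi_k(N)\le 1.95\,\frac{N}{\log N}\,\frac{L^{k-1}}{(k-1)!}.
\end{equation*}

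\textbf{Base case $k=1$.} Here $\pi_1(N)$ counts prime powers up to $N$. This equals $\pi(N)+O(\sqrt N\log N)$, and we would bound it using the explicit estimate $\pi(N)<1.26\,N/\log N$ of Rosser--Schoenfeld together with a crude count of higher prime powers. For small $N$ we would check directly, so that the constant $1.95$ covers all $N\ge 3$.

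\textbf{Inductive step.} Let $n\le N$ with $\omega(n)=k+1$, and write $n=\prod_{i} p_i^{a_i}$ over its exact prime-power components $p_i^{a_i}\,\|\, n$. At most one of these components exceeds $\sqrt N$. Hence at least $k$ of the $k+1$ factorisations $n=p_i^{a_i}\cdot m$, with $\omega(m)=k$ and $p_i^{a_i}\le\sqrt N$, are available. Counting pairs gives
\begin{equation*}
k\,\pi_{k+1}(N)\le \sum_{p^a\le \sqrt N}\pi_k\!\left(N/p^a\right).
\end{equation*}
We then insert the inductive bound for $\pi_k(N/p^a)$, using $\log\log(N/p^a)\le\log\log N$ (note $N/p^a\ge\sqrt N\ge 3$ once $N\ge 9$; smaller $N$ is checked by hand). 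This yields
\begin{equation*}
k\,\pi_{k+1}(N)\le 1.95\,N\,\frac{L^{k-1}}{(k-1)!}\sum_{p^a\le\sqrt N}\frac{1}{p^a\log(N/p^a)}.
\end{equation*}
So everything reduces to the following explicit Mertens-type estimate, which would close the induction:
\begin{equation*}
\sum_{p^a\le\sqrt N}\frac{1}{p^a\log(N/p^a)}\le\frac{\log\log N+4.096}{\log N}.
\end{equation*}

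\textbf{Main obstacle: the Mertens-type estimate.} For the prime terms we would write
\begin{equation*}
\frac{1}{\log(N/p)}=\frac{1}{\log N}\left(1+\frac{\log p}{\log(N/p)}\right).
\end{equation*}
The leading part $\frac{1}{\log N}\sum_{p\le\sqrt N}1/p$ is handled by the explicit Mertens bound $\sum_{p\le x}1/p<\log\log x+B+1/\log^2 x$. The correction
\begin{equation*}
\frac{1}{\log N}\sum_{p\le\sqrt N}\frac{\log p}{p\log(N/p)}
\end{equation*}
is handled by partial summation against the explicit bound $\sum_{p\le t}\log p/p<\log t$. In the resulting integral $\int_1^{\sqrt N}\frac{dt}{t\log(N/t)}=\log 2$, which contributes $O(1)$. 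The prime powers with $a\ge2$ contribute at most $\frac{2}{\log N}\sum_p\frac{1}{p(p-1)}$, also $O(1/\log N)$. Collecting these constants, namely $B\approx0.2615$ from Mertens, $\log 2$ from the integral, the prime-power sum, and the losses for small $N$, is what should produce $4.096$.

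This bookkeeping of explicit constants is the step we expect to be most delicate. The structural induction itself is routine once the inequality above is in hand.
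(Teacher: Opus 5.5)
The paper does not prove this lemma. It imports it as a black box from Durkan's Theorem 1.2, so your sketch is an independent reconstruction rather than a variant of an argument in the paper. The reconstruction is sound. The injective map $(n,i)\mapsto(p_i^{a_i},n/p_i^{a_i})$, restricted to components $p_i^{a_i}\le\sqrt N$, gives $k\,\pi_{k+1}(N)\le\sum_{p^a\le\sqrt N}\pi_k(N/p^a)$. The constant $1.95$ enters only through the base case. The cases $3\le N<9$ are trivial, since only $n=6$ has $\omega(n)\ge 2$ there. For $N\ge 9$ the induction closes exactly when $\log N\sum_{p^a\le\sqrt N}\frac{1}{p^a\log(N/p^a)}\le\log\log N+4.096$.

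Two remarks on where the difficulty actually lies.

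First, the Mertens-type step is not delicate. As you phrase it, partial summation against the upper bound $\sum_{p\le t}\log p/p<\log t$ alone does not give $\log 2$. The weight $1/\log(N/t)$ is increasing, so the integral term would also need a lower bound for $\sum_{p\le t}\log p/p$. But you do not need $\log 2$ at all: monotonicity gives $\sum_{p\le\sqrt N}\frac{\log p}{p\log(N/p)}\le\frac{2}{\log N}\sum_{p\le\sqrt N}\frac{\log p}{p}<1$. Combine this with:
\begin{itemize}
\item $\sum_{p\le x}1/p<\log\log x+B+1/\log^2 x$ at $x=\sqrt N\ge 3$;
\item $2\sum_p\frac{1}{p(p-1)}<1.55$ for the higher prime powers.
\end{itemize}
This bounds the left side by $\log\log N+2.95$. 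So $4.096$ only has to be dominated, not ``produced''.

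Second, the genuinely tight step is the base case. Here $\pi_1(32)=18$ while $1.95\cdot 32/\log 32\approx 18.005$. So $1.95$ is essentially $\sup_{N\ge 3}\pi_1(N)\log N/N$, and your finite check for small $N$ must be carried out exactly.
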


\begin{proof}[Proof of Theorem \ref{thm:divisors_Durkan}.]
    Since $\log(N)^{\log(2)(1-\epsilon)}=2^{(1-\epsilon)\log\log(N)}$ and $\tau(n)\geq 2^{\omega(n)}$, any $n\leq N$ with $\tau(n)<\log(N)^{\log(2)(1-\epsilon)}$ satisfies $\omega(n)<(1-\epsilon)\log\log(N)$. Writing $E$ for the number of such $n$, the left-hand side of \eqref{eq:divisors_Durkan} is at least $N-E$, so it suffices to show that $E\leq c_\epsilon N/\log(N)^{\epsilon^2/2}$.

    The integer $n=1$ contributes $1$ to $E$, and we bound the remaining $n$ using Lemma \ref{lem:durkan_level}. Defining $K\defeq (1-\epsilon)(\log\log(N)+4.096)$ and re-indexing by $j=k-1$, by Lemma \ref{lem:durkan_level} we have
    \begin{equation}\label{eq:sum_levels}
        E-1 \;\leq\; \frac{1.95\,N}{\log(N)}\sum_{0\leq j<K}\frac{(\log\log(N)+4.096)^{j}}{j!}.
    \end{equation}
    The terms in this sum grow as a function of $j$, and since $j<K$, the ratios of consecutive terms are
    \begin{equation}
        \frac{\log\log(N)+4.096}{j+1} \geq 
        \frac{\log\log(N)+4.096}{K}
        =\frac{1}{1-\epsilon}.
    \end{equation}
    Therefore the entire sum is at most its final term times 
    \begin{equation}
        \sum_{i\in \Z_{\geq 0}}(1-\epsilon)^{i}=\frac{1}{\epsilon}.
    \end{equation}
    Since the final term is bounded above by
    \begin{equation}
        \frac{(\log\log(N)+4.096)^{K}}{K!},
    \end{equation}
    we obtain the bound
    \begin{equation}\label{eq:E-1_bound}
        E-1 \;<\; \frac{1.95}{\epsilon}\,\frac{N}{\log(N)}\frac{(\log\log(N)+4.096)^K}{K!}.
    \end{equation}
    From the Taylor series for $e^x$ we have $K!>(K/e)^{K}$.  Using this bound and the definition of $K$, the upper bound \eqref{eq:E-1_bound} becomes
    \begin{equation}\label{eq:collapse}
        E-1 \;<\; \frac{1.95}{\epsilon}\,\frac{N}{\log(N)}\left(\frac{e(\log\log(N)+4.096)}{K}\right)^{K}
        = \frac{1.95}{\epsilon}\,\frac{N}{\log(N)}\left(\frac{e}{1-\epsilon}\right)^{K}.
    \end{equation}
    The final factor is
    \begin{salign}\label{eq:e/(1-epsilon)^K_bound}
        \left(\frac{e}{1-\epsilon}\right)^{K}
        &=\left(\frac{e}{1-\epsilon}\right)^{(1-\epsilon)4.096}\left(\frac{e}{1-\epsilon}\right)^{(1-\epsilon)\log\log(N)}\\
        &\leq\left(\frac{e}{1-\epsilon}\right)^{4.096}\left(\frac{e}{1-\epsilon}\right)^{(1-\epsilon)\log\log(N)}\\
        &=\left(\frac{e}{1-\epsilon}\right)^{4.096}\log(N)^{1-\epsilon-(1-\epsilon)\log(1-\epsilon)}.
    \end{salign}
    Plugging this bound back into \eqref{eq:collapse}, and recalling the definition of $c_\epsilon$ from equation \eqref{eq:c_epsilon_def}, we have
    \begin{salign}\label{eq:E-1_final}
        E-1 &< \frac{1.95}{\epsilon}\left(\frac{e}{1-\epsilon}\right)^{4.096}\frac{N}{\log(N)^{\epsilon+(1-\epsilon)\log(1-\epsilon)}} \\
        &=\; \frac{1.95}{4}\,c_\epsilon\,\frac{N}{\log(N)^{\epsilon+(1-\epsilon)\log(1-\epsilon)}} \\
        &< \frac{c_\epsilon}{2}\cdot\frac{N}{\log(N)^{\epsilon+(1-\epsilon)\log(1-\epsilon)}}.
    \end{salign}
    Using the Taylor expansion of $\log(1-x)$, we have that $\epsilon+(1-\epsilon)\log(1-\epsilon)>\epsilon^2/2$, and applying this to \eqref{eq:E-1_final} gives
    \begin{equation}\label{eq:E-1_power}
        E-1 \;<\; \frac{c_\epsilon}{2}\cdot\frac{N}{\log(N)^{\epsilon^2/2}}.
    \end{equation}
    Since $\log(N)^{\epsilon^2/2}\leq N$ for $N\geq 3$, we have $N/\log(N)^{\epsilon^2/2}\geq 1$. This, combined with  bound $c_\epsilon>4e^{4.096}>1$, shows that
    \begin{equation}
        E \;<\; 1+\frac{c_\epsilon}{2}\cdot\frac{N}{\log(N)^{\epsilon^2/2}} \;\leq\; \frac{c_\epsilon}{2}\cdot\frac{N}{\log(N)^{\epsilon^2/2}}+\frac{c_\epsilon}{2}\cdot\frac{N}{\log(N)^{\epsilon^2/2}} \;=\; \frac{c_\epsilon N}{\log(N)^{\epsilon^2/2}}.
    \end{equation}
    Subtracting from $N$ gives \eqref{eq:divisors_Durkan}.
\end{proof}

We now prove Corollary \ref{cor:Explicit_Largest_SCC_large_N}.

\begin{proof}[Proof of Corollary \ref{cor:Explicit_Largest_SCC_large_N}.]
    By Theorem \ref{thm:divisors_Durkan} we may apply Theorem \ref{thm:Largest_SCC_abstract_bound} with $x=\log(N)^{\log(2)(1-\epsilon)}-2$ and $y=N\left(1-c_\epsilon/\log(N)^{\epsilon^2/2}\right)$, which proves the corollary.
\end{proof}

Although Theorem \ref{thm:divisors_Durkan} holds for all $N\geq 3$, the constant $c_\epsilon$ renders it vacuous unless $N$ is large in terms of $\epsilon$. To contrast this, we give a simple, non-trivial bound that holds for all $N$. 

Let $p_m$ denote the $m$-th prime number (e.g., $p_3=5$). The $n$-th primorial is defined as the product of the first $n$ primes. 
\begin{equation}
    p_n\prim \defeq \prod_{m=1}^n p_m.
\end{equation}

\begin{proposition}\label{prop:primorial_bound}
Let $D\in \R_{\geq 1}$, and let $d=\lceil \log_2(D)\rceil$. Then
    \begin{equation}\label{eq:prim_bound}
        \#\{n\leq N : \tau(n) \geq D\} \geq \left\lfloor \frac{N}{p_{d}\prim}\right\rfloor.
    \end{equation}
\end{proposition}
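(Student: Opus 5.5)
The plan is to exhibit an explicit family of integers $n\leq N$ with many divisors, namely the multiples of the primorial $p_d\prim$. Write $P\defeq p_d\prim$. For any positive integer $k$, the number $n=kP$ is divisible by each of the first $d$ primes $p_1,\dots,p_d$. Hence $\omega(n)\geq d$. Using the elementary bound $\tau(n)\geq 2^{\omega(n)}$ already noted before Lemma \ref{lem:durkan_level}, we get
\begin{equation}
    \tau(kP)\geq 2^{d}.
\end{equation}
One could also see this directly: every one of the $2^d$ squarefree products of subsets of $\{p_1,\dots,p_d\}$ divides $P$, and therefore divides $kP$.

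Next, compare $2^d$ with $D$. Since $d=\lceil \log_2(D)\rceil\geq \log_2(D)$, we have $2^d\geq D$, and so $\tau(kP)\geq D$ for every $k\geq 1$. The hypothesis $D\geq 1$ guarantees $d\geq 0$, so $d$ is a nonnegative integer and $P$ makes sense. When $d=0$ the empty product gives $P=1$, and the bound reads $\#\{n\leq N:\tau(n)\geq 1\}\geq N$, which is trivially true.

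Finally, count the admissible integers. The multiples $kP$ with $kP\leq N$ are exactly those with $1\leq k\leq \lfloor N/P\rfloor$. There are therefore $\lfloor N/P\rfloor$ distinct integers $n\leq N$ with $\tau(n)\geq D$, which gives \eqref{eq:prim_bound}.

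No step is a genuine obstacle. The only care needed is the rounding convention: $\lceil\cdot\rceil$ is what ensures $2^d\geq D$, and the degenerate case $d=0$ must be checked separately.
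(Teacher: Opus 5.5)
Your proof is correct and is essentially the paper's argument: the paper also takes the multiples $m\cdot p_d\prim\leq N$, notes that every divisor of $p_d\prim$ divides each such multiple, and concludes $\tau(m\cdot p_d\prim)\geq \tau(p_d\prim)=2^d\geq D$. The separate check of $d=0$ is harmless but unnecessary, since the general argument already covers $p_d\prim=1$.
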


\begin{proof}
    Note that $\tau(p_d\prim)=2^d\geq D$. Each  element of the set
    \begin{equation}
        \{ m\cdot p_d\prim : m\in \Z_{\geq 1} \text{ and } m\cdot p_d\prim\leq N\}
        =
         \{ m\cdot p_d\prim : 1 \leq m\leq \lfloor N/p_d\prim\rfloor\}
    \end{equation}
    has $\tau(n\cdot p_d\prim)\geq \tau(p_d\prim)\geq D$. As the set has cardinality $\lfloor N/p_d\prim\rfloor$, we obtain the bound (\ref{eq:prim_bound}). 
\end{proof}


\section{Simulations}\label{sec:simulations}

In this section we discuss data from simulations for the largest strongly connected component and diameter of randomly oriented divisor graphs. The data and code used to produce the data can be found on the \texttt{GitHub} repository \cite{KP26}. 

For the largest strongly connected component, we compute data for $N\in \{256m : 1 \leq m\leq 1024\}$  and $\rho\in \{0.1, 0.2, 0.3, 0.4, 0.5\}$. For each pair $(N,\rho)$ we sample $50$ directed graphs from $\cD_\rho(N)$, and for each of these sampled graphs we compute the size of their largest strongly connected component using Tarjan's algorithm. Figure \ref{fig:largest_scc} displays the ratio of the average largest strongly connected component over the samples for each $(N,\rho)$ with $N$. Note that what is displayed is a discrete set points, and not actually a line. Figure \ref{fig:largest_scc_0.5} 
displays this ratio when $\rho=0.5$ together with the lower bound coming from Corollary \ref{cor:Explicit_Largest_SCC_Primorial}. We see that the lower bound from Corollary \ref{cor:Explicit_Largest_SCC_Primorial} shows that on average the largest strongly connected component makes up a positive proportion of the entire graph, but that this proportion is much less than the true proportion. 

\begin{figure}[htbp]
  \centering
  \includegraphics[scale=0.4]{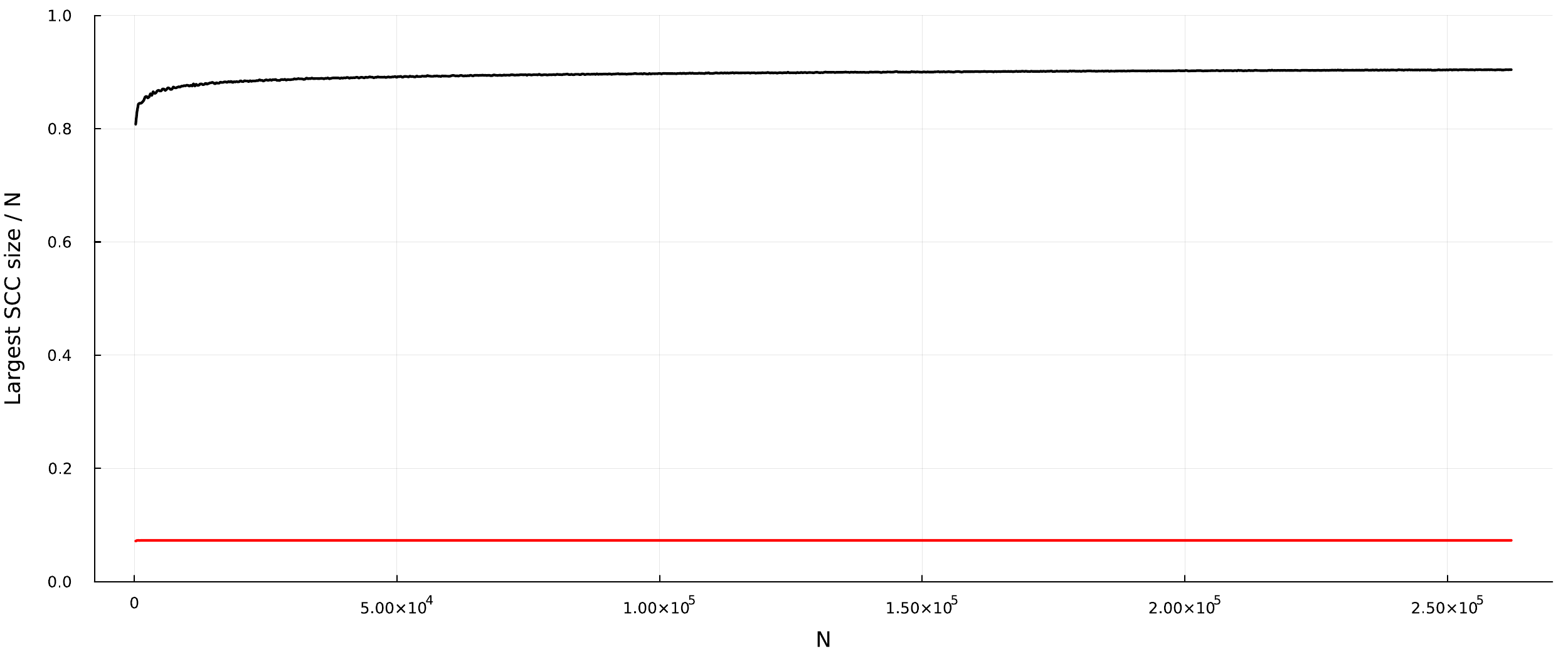}
  \caption{Average size of largest strongly connected component (SCC) with $\rho=0.5$ (in black) and lower bound from Corollary \ref{cor:Explicit_Largest_SCC_Primorial} (in red). }
  \label{fig:largest_scc_0.5}
\end{figure}

\begin{figure}[htbp]
  \centering
  \includegraphics[scale=0.4]{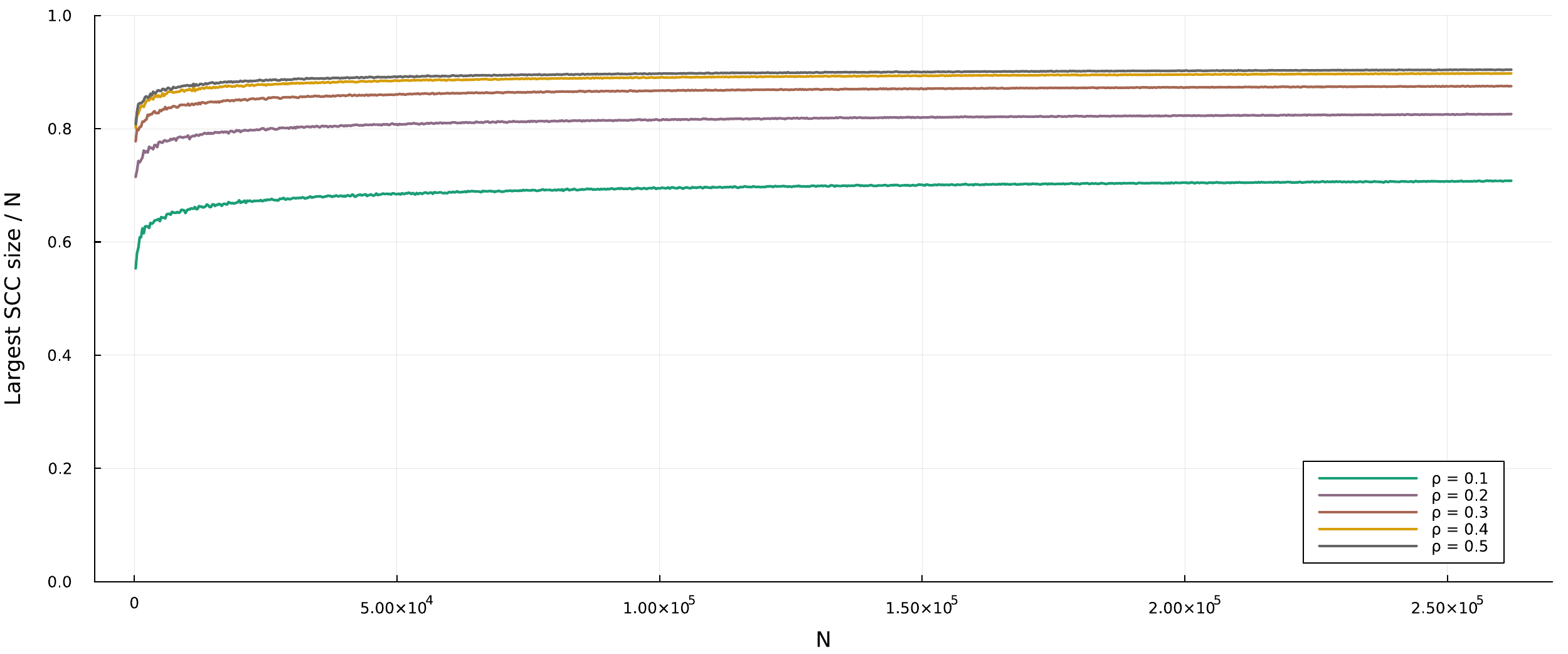}
  \caption{Average size of largest strongly connected component (SCC) for different $\rho$ values. }
  \label{fig:largest_scc}
\end{figure}

Recall that the diameter of a directed graph is the maximum shortest path between two vertices. Since most of the graphs we consider will not be strongly connected, there will not be a direct path between every two vertices. For this reason, the diameter will always refer to the diameter of the largest strongly connected component.

For the diameter, we compute data for $N\in \{1024m : 1 \leq m\leq 323\}$  and $\rho\in \{0.1, 0.2, 0.3, 0.4, 0.5\}$. For each pair $(N,\rho)$ we sample $10$ directed graphs from $\cD_\rho(N)$, and for each of these sampled graphs we compute the diameter using the algorithm in \cite{CGLM12}.\footnote{Computing the diameter is much more computationally intensive than the largest strongly connected component, which is why we computed fewer samples.} 
Figure \ref{fig:diameter_0.5} displays the average diameter over the samples for each $N$ with $\rho=0.5$, together with a line of best fit obtained from linear regression. 

\begin{figure}[htbp]
  \centering
  \includegraphics[scale=0.4]{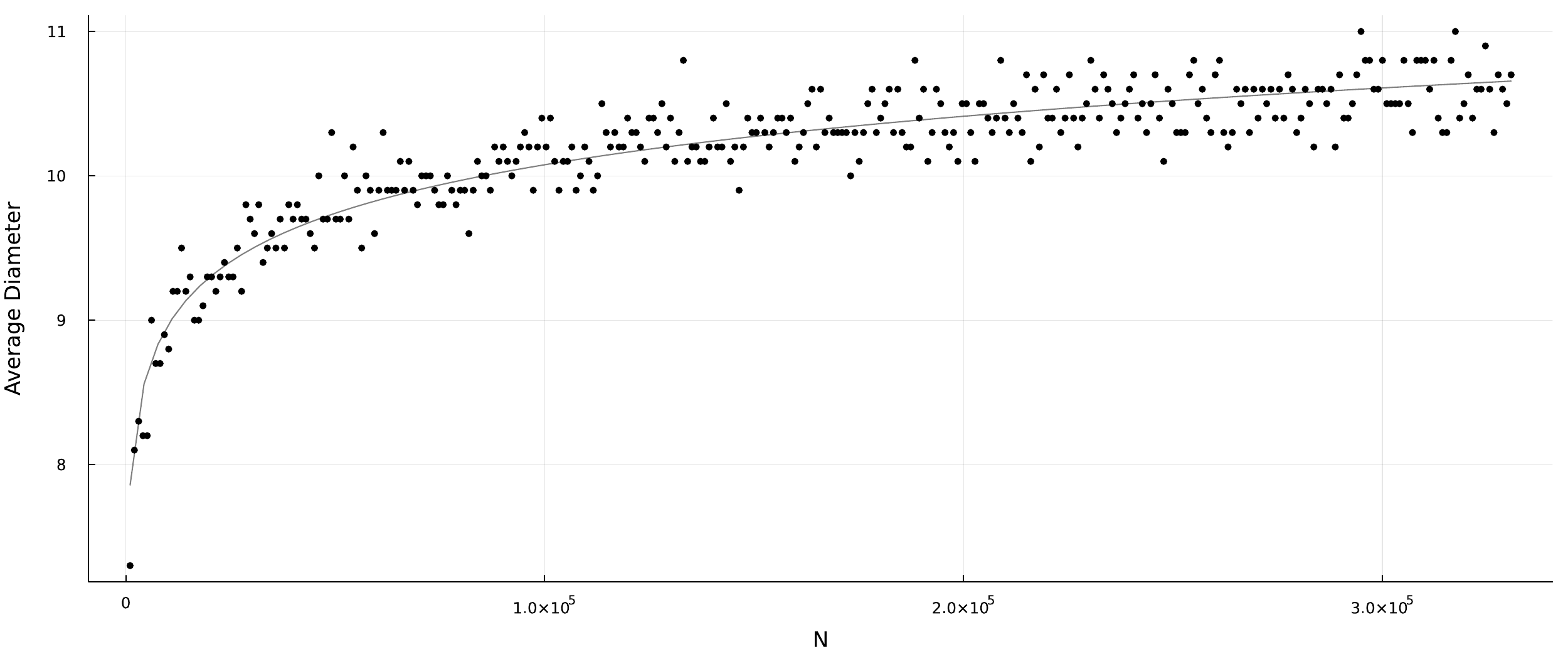}
  \caption{Average diameter of simulation with $\rho=0.5$, together with line of best fit. }
  \label{fig:diameter_0.5}
\end{figure}

For each value of $\rho$, using linear regression we find constants $\alpha_\rho$ and $\beta_\rho$ (given in Table \ref{tab:diameter}) such that the average diameter of $\cD_\rho(N)$ is approximately
\begin{equation}
    \alpha_\rho \log(N) + \beta_\rho.
\end{equation}
This gives computation support to Conjecture \ref{conj:diameter}.

\begin{table}[h]
\centering
\renewcommand{\arraystretch}{1.2}
\begin{tabular}{cccc}
\toprule
$\rho$ & $\alpha_\rho$ & $\beta_\rho$ & Mean Squared Error\\
\midrule
$0.1,\ 0.9$ & $0.631556$  & $3.9032$ & $0.0533449$ \\
$0.2,\ 0.8$ & $0.522757$ & $4.5697$ & $0.0414596$\\
$0.3,\ 0.7$ & $0.527978$ & $4.2336$ & $0.0409155$ \\
$0.4,\ 0.6$ & $0.500323$ & $4.3819$ & $0.028766$\\
$0.5$ & $0.484408$ & $4.4997$ & $0.0323064$ \\
\bottomrule
\end{tabular}
\caption{Lines of best fit for average diameter.}
\label{tab:diameter}
\end{table}

\begin{remark}
    Although we put forth Conjecture \ref{conj:diameter} and give some computation support for it, we acknowledge that there are significant computational limitations. For example, the size of the diameter may grow much slower, like $\log\log(N)$, and this may not be apparent in the data for the relatively small values of $N$ we are able to compute. The result of Hardy and Ramanujan on the normal order of $\log(\tau(n))$ is a good example of a result that is infeasible to check empirically.  
\end{remark}

\section{Further questions}\label{sec:questions}

In this section we pose some further questions about randomly oriented divisor graphs.

\begin{question}
    From the data in Section \ref{sec:simulations} we see that the lower bound in Corollary \ref{cor:Explicit_Largest_SCC_Primorial} is far from optimal. To what extent can this lower bound be improved for values of $N$ for which Corollary \ref{cor:Explicit_Largest_SCC_large_N} does not apply?
\end{question}

\begin{question}
    What can be said about the expected size of the largest strongly connected component as we remove the vertices of highest degrees, one by one. In other words, how will the expected size of the largest strongly connected component change as we remove the vertices labeled 1, 2, 3, 4, and so on?\footnote{This question is motivated by hub removal attacks in network science.}
\end{question}

\begin{question}
    What can be said about the variance of the largest strongly connected component in randomly oriented divisor graphs?\\
    Can a suitable version of the Erd\H{o}s--Kac Theorem be used to understand the limiting distribution of $\#\Phi_{}(\cD_\rho(N))$ as $N\to\infty$?
\end{question}

\begin{question}
    What is the expected number of strongly connected triangles in the randomly oriented divisor graph $\cD_\rho(N)$?
\end{question}

\begin{question}
    To what extent do randomly oriented divisor graphs behave similarly to (or differently from) real, scale-free networks?
\end{question}

\bibliographystyle{alpha}
\bibliography{bibfile}

@article {AAA10,
    AUTHOR = {Al-Addasi, S. and AbuGhneim, O. A. and Al-Ezeh, H.},
     TITLE = {Characterizing powers of cycles that are divisor graphs},
   JOURNAL = {Ars Combin.},
  FJOURNAL = {Ars Combinatoria. A Canadian Journal of Combinatorics},
    VOLUME = {97A},
      YEAR = {2010},
     PAGES = {447--451},
      ISSN = {0381-7032,2817-5204},
   MRCLASS = {05C76 (05C38)},
  MRNUMBER = {2721819},
}

@incollection {CP04 ,
    AUTHOR = {Campanino, Massimo and Petritis, Dimitri},
     TITLE = {On the physical relevance of random walks: an example of
              random walks on a randomly oriented lattice},
 BOOKTITLE = {Random walks and geometry},
     PAGES = {393--411},
 PUBLISHER = {Walter de Gruyter, Berlin},
      YEAR = {2004},
      ISBN = {3-11-017237-2},
   MRCLASS = {60G50},
  MRNUMBER = {2087791},
MRREVIEWER = {Heinrich\ Niederhausen},
}

@article{CGPS11,
    AUTHOR = {Castell, Fabienne and Guillotin-Plantard, Nadine and P\`ene,
              Fran\c coise and Schapira, Bruno},
     TITLE = {A local limit theorem for random walks in random scenery and
              on randomly oriented lattices},
   JOURNAL = {Ann. Probab.},
  FJOURNAL = {The Annals of Probability},
    VOLUME = {39},
      YEAR = {2011},
    NUMBER = {6},
     PAGES = {2079--2118},
      ISSN = {0091-1798,2168-894X},
   MRCLASS = {60K37 (60F05 60G50 60G52)},
  MRNUMBER = {2932665},
MRREVIEWER = {Andrew\ R.\ Wade},
       DOI = {10.1214/10-AOP606},
       URL = {https://doi-org.dartmouth.idm.oclc.org/10.1214/10-AOP606},
}

@inproceedings {CMS01,
    AUTHOR = {Chartrand, Gary and Muntean, Raluca and Saenpholphat, Varaporn
              and Zhang, Ping},
     TITLE = {Which graphs are divisor graphs?},
 BOOKTITLE = {Proceedings of the {T}hirty-second {S}outheastern
              {I}nternational {C}onference on {C}ombinatorics, {G}raph
              {T}heory and {C}omputing ({B}aton {R}ouge, {LA}, 2001)},
   JOURNAL = {Congr. Numer.},
  FJOURNAL = {Congressus Numerantium. A Conference Journal on Numerical
              Themes},
    VOLUME = {151},
      YEAR = {2001},
     PAGES = {189--200},
      ISSN = {0384-9864},
   MRCLASS = {05C78 (05C12 05C20)},
  MRNUMBER = {1887439},
}

@InProceedings{CGLM12,
author="Crescenzi, Pierluigi
and Grossi, Roberto
and Lanzi, Leonardo
and Marino, Andrea",
editor="Klasing, Ralf",
title="On Computing the Diameter of Real-World Directed (Weighted) Graphs",
booktitle="Experimental Algorithms",
year="2012",
publisher="Springer Berlin Heidelberg",
address="Berlin, Heidelberg",
pages="99--110",
isbn="978-3-642-30850-5"
}

@article{Dir51,
  author  = {Dirichlet, Peter Gustav Lejeune},
  title   = {{\"U}ber die Bestimmung der mittleren Werthe in der Zahlentheorie},
  journal = {Abhandlungen der K{\"o}niglichen Preu{\ss}ischen Akademie der Wissenschaften zu Berlin},
  year    = {1851},
  note    = {Read 1849},
  pages   = {69--83}
}

@misc{Dur23,
    AUTHOR = {Durkan, Benjamin},
     TITLE = {On the {H}ardy--{R}amanujan theorem},
      YEAR = {2023},
    EPRINT = {2310.14760},
 ARCHIVEPREFIX = {arXiv},
 PRIMARYCLASS = {math.NT},
      NOTE = {arXiv:2310.14760},
       DOI = {10.48550/arXiv.2310.14760},
       URL = {https://arxiv.org/abs/2310.14760},
}

@article {ES95,
    AUTHOR = {Erd\H{o}s, Paul and Saias, \'Eric},
     TITLE = {Sur le graphe divisoriel},
   JOURNAL = {Acta Arith.},
  FJOURNAL = {Acta Arithmetica},
    VOLUME = {73},
      YEAR = {1995},
    NUMBER = {2},
     PAGES = {189--198},
      ISSN = {0065-1036,1730-6264},
   MRCLASS = {11N37 (05C38)},
  MRNUMBER = {1358194},
MRREVIEWER = {A.\ J.\ Hildebrand},
       DOI = {10.4064/aa-73-2-189-198},
       URL = {https://doi-org.dartmouth.idm.oclc.org/10.4064/aa-73-2-189-198},
}

@article{Fra03,
  author  = {Frayer, Christopher},
  title   = {Properties of Divisor Graphs},
  journal = {Rose-Hulman Undergraduate Mathematics Journal},
  volume  = {4},
  number  = {2},
  article = {4},
  year    = {2003}
}

@article {Gri01,
    AUTHOR = {Grimmett, Geoffrey R.},
     TITLE = {Infinite paths in randomly oriented lattices},
   JOURNAL = {Random Structures Algorithms},
  FJOURNAL = {Random Structures \& Algorithms},
    VOLUME = {18},
      YEAR = {2001},
    NUMBER = {3},
     PAGES = {257--266},
      ISSN = {1042-9832,1098-2418},
   MRCLASS = {05D40 (60K35 82B43)},
  MRNUMBER = {1824275},
       DOI = {10.1002/rsa.1007},
       URL = {https://doi-org.dartmouth.idm.oclc.org/10.1002/rsa.1007},
}

@article {GL07,
    AUTHOR = {Guillotin-Plantard, N. and Le Ny, A.},
     TITLE = {Transient random walks on 2{D}-oriented lattices},
   JOURNAL = {Teor. Veroyatn. Primen.},
  FJOURNAL = {Teoriya Veroyatnoste\u i\ i ee Primeneniya},
    VOLUME = {52},
      YEAR = {2007},
    NUMBER = {4},
     PAGES = {815--826},
      ISSN = {0040-361X,2305-3151},
   MRCLASS = {60G50},
  MRNUMBER = {2742878},
       DOI = {10.1137/S0040585X97983353},
       URL = {https://doi-org.dartmouth.idm.oclc.org/10.1137/S0040585X97983353},
}

@incollection {HR17,
    AUTHOR = {Hardy, G. H. and Ramanujan, S.},
     TITLE = {The normal number of prime factors of a number {$n$} [{Q}uart.
              {J}. {M}ath. {\bf 48} (1917), 76--92]},
 BOOKTITLE = {Collected papers of {S}rinivasa {R}amanujan},
     PAGES = {262--275},
 PUBLISHER = {AMS Chelsea Publ., Providence, RI},
      YEAR = {2000},
      ISBN = {0-8218-2076-1},
   MRCLASS = {01A75},
  MRNUMBER = {2280878},
}

@book {IK04,
    AUTHOR = {Iwaniec, Henryk and Kowalski, Emmanuel},
     TITLE = {Analytic number theory},
    SERIES = {American Mathematical Society Colloquium Publications},
    VOLUME = {53},
 PUBLISHER = {American Mathematical Society, Providence, RI},
      YEAR = {2004},
     PAGES = {xii+615},
      ISBN = {0-8218-3633-1},
   MRCLASS = {11-02 (11Fxx 11Lxx 11Mxx 11Nxx)},
  MRNUMBER = {2061214},
MRREVIEWER = {K.\ Soundararajan},
       DOI = {10.1090/coll/053},
       URL = {https://doi-org.dartmouth.idm.oclc.org/10.1090/coll/053},
}

@misc{KP26,
  author = {Kim, Jihyung and Phillips, Tristan},
  title = {randomly-oriented-divisor-graphs},
  year = {2026},
  url = {https://github.com/LoveLow-Global/randomly-oriented-divisor-graphs},
  note = {GitHub repository: {https://github.com/LoveLow-Global/randomly-oriented-divisor-graphs}}
}

@article {Lin11,
    AUTHOR = {Linusson, Svante},
     TITLE = {On percolation and the bunkbed conjecture},
   JOURNAL = {Combin. Probab. Comput.},
  FJOURNAL = {Combinatorics, Probability and Computing},
    VOLUME = {20},
      YEAR = {2011},
    NUMBER = {1},
     PAGES = {103--117},
      ISSN = {0963-5483,1469-2163},
   MRCLASS = {05C80 (05C20 05C76)},
  MRNUMBER = {2745680},
MRREVIEWER = {Raphael\ Yuster},
       DOI = {10.1017/S0963548309990666},
       URL = {https://doi-org.dartmouth.idm.oclc.org/10.1017/S0963548309990666},
}

@article {McD81,
    AUTHOR = {McDiarmid, Colin},
     TITLE = {General percolation and random graphs},
   JOURNAL = {Adv. in Appl. Probab.},
  FJOURNAL = {Advances in Applied Probability},
    VOLUME = {13},
      YEAR = {1981},
    NUMBER = {1},
     PAGES = {40--60},
      ISSN = {0001-8678,1475-6064},
   MRCLASS = {60K99 (05C99 82A42)},
  MRNUMBER = {595886},
MRREVIEWER = {G.\ R.\ Grimmett},
       DOI = {10.2307/1426466},
       URL = {https://doi-org.dartmouth.idm.oclc.org/10.2307/1426466},
}

@article {McN21,
    AUTHOR = {McNew, Nathan},
     TITLE = {Counting primitive subsets and other statistics of the divisor
              graph of {$\{1,2,\dots,n\}$}},
   JOURNAL = {European J. Combin.},
  FJOURNAL = {European Journal of Combinatorics},
    VOLUME = {92},
      YEAR = {2021},
     PAGES = {Paper No. 103237, 20},
      ISSN = {0195-6698,1095-9971},
   MRCLASS = {05C30 (05C25)},
  MRNUMBER = {4149161},
MRREVIEWER = {Ali\ Reza\ Moghaddamfar},
       DOI = {10.1016/j.ejc.2020.103237},
       URL = {https://doi-org.dartmouth.idm.oclc.org/10.1016/j.ejc.2020.103237},
}

@article {MS20,
    AUTHOR = {Melotti, Paul and Saias, \'Eric},
     TITLE = {On path partitions of the divisor graph},
   JOURNAL = {Acta Arith.},
  FJOURNAL = {Acta Arithmetica},
    VOLUME = {192},
      YEAR = {2020},
    NUMBER = {4},
     PAGES = {329--339},
      ISSN = {0065-1036,1730-6264},
   MRCLASS = {11N37 (05C38 05C70 11B75)},
  MRNUMBER = {4054577},
MRREVIEWER = {John\ J.\ Watkins},
       DOI = {10.4064/aa180711-26-4},
       URL = {https://doi-org.dartmouth.idm.oclc.org/10.4064/aa180711-26-4},
}

@article {Nar18,
    AUTHOR = {Narayanan, Bhargav},
     TITLE = {Connections in randomly oriented graphs},
   JOURNAL = {Combin. Probab. Comput.},
  FJOURNAL = {Combinatorics, Probability and Computing},
    VOLUME = {27},
      YEAR = {2018},
    NUMBER = {4},
     PAGES = {667--671},
      ISSN = {0963-5483,1469-2163},
   MRCLASS = {60C05 (05C20 05C38 60K35)},
  MRNUMBER = {3816063},
MRREVIEWER = {Christian\ M\"onch},
       DOI = {10.1017/S0963548316000341},
       URL = {https://doi-org.dartmouth.idm.oclc.org/10.1017/S0963548316000341},
}

@inproceedings {Pom83,
    AUTHOR = {Pomerance, Carl},
     TITLE = {On the longest simple path in the divisor graph},
 BOOKTITLE = {Proceedings of the fourteenth {S}outheastern conference on
              combinatorics, graph theory and computing ({B}oca {R}aton,
              {F}la., 1983)},
   JOURNAL = {Congr. Numer.},
  FJOURNAL = {Congressus Numerantium. A Conference Journal on Numerical
              Themes},
    VOLUME = {40},
      YEAR = {1983},
     PAGES = {291--304},
      ISSN = {0384-9864},
   MRCLASS = {05C38 (11N37)},
  MRNUMBER = {734378},
MRREVIEWER = {A.\ D.\ Pollington},
}

@article{RA20,
  title={Patterns of primes and composites from divisibility network of natural numbers},
  author={Rajans, Abiya and Ambika, G.},
  journal={Network Science},
  volume={8},
  number={4},
  pages={519--532},
  year={2020},
  publisher={Cambridge University Press},
  doi={10.1017/nws.2020.20}
}

@article {RD23,
    AUTHOR = {Ravi, Vignesh and Desikan, Kalyani},
     TITLE = {Brief survey on divisor graphs and divisor function graphs},
   JOURNAL = {AKCE Int. J. Graphs Comb.},
  FJOURNAL = {AKCE International Journal of Graphs and Combinatorics},
    VOLUME = {20},
      YEAR = {2023},
    NUMBER = {2},
     PAGES = {217--225},
      ISSN = {0972-8600,2543-3474},
   MRCLASS = {05C76 (05-02)},
  MRNUMBER = {4637194},
       DOI = {10.1080/09728600.2023.2234979},
       URL = {https://doi-org.dartmouth.idm.oclc.org/10.1080/09728600.2023.2234979},
}

@article {Sai98,
    AUTHOR = {Saias, Eric},
     TITLE = {Applications des entiers \`a{} diviseurs denses},
   JOURNAL = {Acta Arith.},
  FJOURNAL = {Acta Arithmetica},
    VOLUME = {83},
      YEAR = {1998},
    NUMBER = {3},
     PAGES = {225--240},
      ISSN = {0065-1036,1730-6264},
   MRCLASS = {11N25 (05C38 11N35)},
  MRNUMBER = {1611201},
MRREVIEWER = {A.\ J.\ Hildebrand},
       DOI = {10.4064/aa-83-3-225-240},
       URL = {https://doi-org.dartmouth.idm.oclc.org/10.4064/aa-83-3-225-240},
}

@article{SBA15,
  title={Divisibility patterns of natural numbers on a complex network},
  author={Shekatkar, Snehal M and Bhagwat, Chandrasheel and Ambika, G},
  journal={Scientific Reports},
  volume={5},
  number={1},
  pages={1--13},
  year={2015},
  publisher={Nature Publishing Group},
  url={https://www.nature.com/articles/srep14280}
}

@article{SZ10,
  title={Natural number network and the prime number theorem},
  author={Shi, Dinghua and Zhang, Huijie},
  journal={Complex Systems and Complexity Science},
  volume={7},
  number={4},
  pages={1--9},
  year={2010}
}

\end{document}